\newtheorem{thm}{\bf Theorem}
\newtheorem{prop}{\bf Property}
\newtheorem{propo}{\bf Proposition}
\newtheorem{lem}{\bf Lemma}
\journal{}
\begin{document}

\begin{frontmatter}

\title{The mixed metric dimension of flower snarks and wheels}

\author{Milica Milivojevi\'c Danas\fnref{kg}}
\ead{milica.milivojevic@kg.ac.rs}

\address[kg]{Faculty of Science and Mathematics, University of Kragujevac, Radoja Domanovi\' ca 12, Kragujevac, Serbia }

\begin{abstract}
New graph invariant, which is called mixed metric dimension, has been recently introduced.
In this paper, exact results of mixed metric dimension on two special classes  of graphs are found:
flower snarks $J_n$ and wheels $W_n$. It is proved that mixed metric dimension for $J_5$ is equal to 5,
while for higher dimensions it is constant and equal to 4. For $W_n$, its mixed metric dimension is not constant,
but it is equal to $n$ when $n\geq 4$, while it is equal to 4, for $n=3$.
\end{abstract}

\begin{keyword}
Wheels graphs \sep mixed metric dimension \sep flower snarks \sep graph theory \sep discrete mathematics
\end{keyword}

\end{frontmatter}

 \section{Introduction}

Let $G=(V,E)$ be a connected graph, where $V$ represent a set whose elements are called vertices and $E$ represent a set whose elements are called edges. The mixed metric dimension of graphs was introduced by Kelenc et al. (2017) in \cite{bib8}. This dimension of a graph G is a combination the metric dimension and edge metric dimension.

The distance in the connected graph $G$, for any two vertices $u$ and $v$, is the length of a shortest $u-v$ path in $G$. The vertex $w$ resolving a pair $u,v \in V$ if $d(u, w)\neq d(v, w)$.  The metric coordinates $r(v,S)$ of vertex $v$ with respect to an ordered set of vertices $S=\{w_{1},w_{2},...,w_{k}\}$ is defined as $r(v,S)=(d(v,w_{1}),d(v,w_{2}),$ $...,d(v,w_{k}))$. Set $S$ is called resolving set if metric coordinates of all vertices differ  from each other in respect to set $S$. The metric basis of graph $G$ is a resolving set of the minimum cardinality. The metric dimension of graph $G$ is the cardinality of metric basis for graph  $G$ and is denoted by $\beta(G)$. Slater in \cite{bib1} and Harary and Melter (1976) in \cite{bib2} independently of one another introduced resolving sets for graphs. Also, there were published several works regarding applications and some theoretical properties of this invariante. For instance, applications to the direction of robots in networks are analized in \cite{bib3} and applications to chemistry in (\cite{bib5,bib16}), among others. In the literature exist several other variations of metric dimension:  resolving dominating sets \cite{bib13} , strong metric dimension \cite{bib14},  local metric dimension \cite{bib12}, $k$-metric dimension \cite{bib15}, $k$-metric antidimension  \cite{bib18,bib17}, etc.

The distance between vertex $w$ and edge $uv$ of graph $G$ is defined as $d(uv, w)=\min \{d(u,w), d(v,w)\}.$ The vertex $w$ edge resolving a pair $e,f \in E$ if $d(w, e)\neq d(w, f)$. The metric coordinates $r(e,S)$ of edge $e$ with respect to an ordered set of vertices $S=\{w_{1},w_{2},...,w_{k}\}$ is defined as $r(e,S)=(d(e,w_{1}),d(e,w_{2}),...,d(e,w_{k}))$.  Set $S$ is edge resolving set if metric coordinates of all edges differ  from each other in respect to set $S$. The edge metric basis of graph G is a edge resolving set
of the minimum cardinality. The edge metric dimension of $G$ is the cardinality of  edge metric basis  for graph  $G$ and is denoted by $\beta_{E}(G)$. The concept of edge metric dimension of graph $G$ was introduced by Kelenc at al., (2016) in \cite{bib11}.

For given graph $G$, since,  every vertex of graph is uniquely determined by resolving set of a graph   and   every edge of graph is uniquely determined by edge resolving set of a graph, the logical question is: whether every edge resolving set of a graph $G$ is also a resolving set and vice versa? In paper \cite{bib11}, authors proved there are several graph families for which the edge resolving set is also a resolving set the graph, but in general case, it is not valid for every graph $G.$ Similarly, for every graph $G$ resolving set is not necessarily edge resolving set for $G$.

Let define set of items as  $V\cup E$, i.e. each item is vertex or edge.
The vertex $v$ mixed resolving a pair of items if $d(v, a)\neq d(v, b)$. The metric coordinates $r(a,S)$ of item $a$ with respect to an ordered set of vertices $S=\{w_{1},w_{2},...,w_{k}\}$ is defined as $r(a,S)=(d(a,w_{1}),d(a,w_{2}),...,d(a,w_{k}))$.  Set $S$ is mixed  resolving set if metric coordinates of all items differ from each other in respect to set $S$. The mixed metric basis of graph G is a mixed resolving set of the minimum cardinality. The mixed metric dimension of $G$ is the cardinality of mixed metric basis for graph  $G$ and is denoted by $\beta_{M}(G)$.

In literature mixed metric dimension is known for several well-known classes of graphs
\begin{itemize}
\item Kelenc et al. (2017) \cite{bib8}: path, cycle, tree, grid, complete bipartite graph;
\item Raza et al. (2019) \cite{raz19}: prism, anti-prism and graph of convex polytopes $R_n$.
\end{itemize}

In this paper, this dimension  will be studied  for two special classes of graphs: flower snarks and wheels.

A flower snark is connected,  bridgeless 3-regular graph. These graphs are denoted with  $J_n$ and have $4n$ vertices and $6n$ edges, where vertex-set is $V(J_n)=\{a_i,b_i,c_i,d_i|i=0,...,n-1\}.$ Vertices $\{a_i|0\leq i \leq n-1\}$ are called as inner vertices and they induce the inner cycle, while set of vertices  $\{b_i|0\leq i \leq n-1\}$ are called a set of central vertices and $\{c_i|0\leq i \leq n-1\}$ and $\{d_i|0\leq i \leq n-1\}$ are outer vertices and they induce the outer cycle. Let edge-set is $E(J_n)=\{a_ib_i,a_ic_i,a_id_i,b_ib_{i+1},c_ic_{i+1},d_id_{i+1}| i=0,1,...,n-1\},$ where the edges $a_{n-1}a_0$ and ${b_{n-1}b_0}$ are replaced by edges $a_{n-1}b_0$ and ${b_{n-1}a_0}$. The indices are taken modulo $n$.

\begin{prop} \label{iso} \mbox{\rm(\cite{kra20})} Let $J_n$ be a flower snark graph and $j\in \{0,1,...,n-1\}$ be an arbitrary number. Then the function $h_j:V(J_n)\rightarrow V(J_n)$ defined as:

\begin{equation}
\begin{aligned}
h_j(a_i) = \begin{cases}
a_{j-i}, & i \le j \le n-1\\
a_{n+j-i}, & 0 \le j < i
\end{cases} \\
h_j(b_i) = \begin{cases}
b_{j-i}, & i \le j \le n-1\\
b_{n+j-i}, & 0 \le j < i
\end{cases}\\
h_j(c_i) = \begin{cases}
c_{j-i}, & i \le j \le n-1\\
d_{n+j-i}, & 0 \le j < i
\end{cases}\\
h_j(d_i) = \begin{cases}
d_{j-i}, & i \le j \le n-1\\
c_{n+j-i}, & 0 \le j < i
\end{cases}
\end{aligned}
\end{equation}
is an isomorphism of flower snark graph $J_n$.
\end{prop}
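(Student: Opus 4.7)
The plan is to establish the two defining properties of a graph isomorphism for $h_j$: that it is a bijection on $V(J_n)$, and that it preserves the edge relation.

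For bijectivity, I would observe that $h_j$ maps the classes $\{a_i\}$ and $\{b_i\}$ into themselves via the same index operation $\sigma_j\colon i\mapsto (j-i)\bmod n$, namely $j-i$ when $i\le j$ and $n+j-i$ when $i>j$; this is simply the reflection about $j/2$ on $\mathbb{Z}/n\mathbb{Z}$, hence a bijection. On the combined class $\{c_i\}\cup\{d_i\}$ the same index operation is used, together with a letter swap $c\leftrightarrow d$ exactly when $i>j$. A short calculation of $h_j\circ h_j$ shows $h_j$ is an involution, so it is in particular bijective on this combined class as well.

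For adjacency preservation, I would treat each family of edges separately. The spoke edges $a_ib_i$, $a_ic_i$, $a_id_i$ present no difficulty: since $a_i$ and its neighbour are subjected to the identical index rule, the image is still a spoke at the same index (possibly with $c$ and $d$ interchanged, which still gives a spoke of $J_n$). For the cycle edges on the $b_i$'s (and, under the inner-cycle reading of the definition, the analogous edges on the $a_i$'s together with the twist pair $a_{n-1}b_0,\, b_{n-1}a_0$), consecutive indices map to consecutive indices whenever both $i$ and $i+1$ lie in the same branch of the definition; the only subtle subcase is $i=j$, where I would verify directly that the image is the wrap-around edge $b_0b_{n-1}$ (respectively the twist edge), which is again an edge of $J_n$.

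The genuinely delicate case is the outer cycle, whose edges connect the $c_i$'s and $d_i$'s as a single $2n$-cycle with a twist. When $i$ and $i+1$ lie on opposite sides of $j$, the definition of $h_j$ applies the letter swap to exactly one of the two endpoints, so an edge such as $c_jc_{j+1}$ is mapped to an edge of the form $c_0d_{n-1}$. The real content of the proof is to check that this letter-swap boundary lines up exactly with the twist that joins the $c$-path and the $d$-path in the outer cycle, so that every outer edge is still mapped to an outer edge. Once this boundary analysis is complete, the remaining cases are routine arithmetic modulo $n$, and the involution property $h_j\circ h_j=\mathrm{id}$ upgrades the injective edge map to a bijection $E(J_n)\to E(J_n)$, completing the proof. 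I expect the main obstacle to be precisely this careful bookkeeping at the $i=j$ boundary for the outer cycle.
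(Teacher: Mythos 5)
The paper never proves this property at all --- it is quoted verbatim from \cite{kra20} --- so there is no internal proof to compare your attempt against; I can only judge the plan on its own merits. In outline it is the right verification: the index map $i\mapsto (j-i)\bmod n$ is an involution, hence $h_j$ is a bijection on each letter class (with the $c$/$d$ swap), spokes $a_ib_i,a_ic_i,a_id_i$ are preserved trivially, consecutive indices go to consecutive indices on the central cycle, and the only genuine content is the boundary bookkeeping on the outer $2n$-cycle: one must check that $c_jc_{j+1}\mapsto c_0d_{n-1}$ and $d_jd_{j+1}\mapsto d_0c_{n-1}$ land exactly on the twist edges $d_{n-1}c_0$ and $c_{n-1}d_0$, and conversely that the twist edges map back onto $c_jc_{j+1}$ and $d_jd_{j+1}$ (onto each other when $j=n-1$). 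That computation does go through, so the plan is viable, although you only promise this key check rather than carry it out.

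The one genuine defect is your treatment of the $a$- and $b$-vertices, where you follow the paper's own erroneous sentence about replacing $a_{n-1}a_0$ and $b_{n-1}b_0$ by $a_{n-1}b_0$ and $b_{n-1}a_0$. In the graph actually used throughout the paper (see the listed edge set, Figure~1, and the edges $c_{2k}d_0$, $c_0d_{2k}$ in Table~2), $a_i$ is a degree-three hub joined only to $b_i,c_i,d_i$ (an $a$-cycle would force degree five, contradicting $3$-regularity), the $b_i$ form an ordinary $n$-cycle including $b_{n-1}b_0$, and the only twist is $c_{n-1}d_0$, $d_{n-1}c_0$ on the outer cycle. Under the ``inner-cycle reading'' you entertain, $h_j$ is not even an isomorphism: it would send $b_jb_{j+1}$ to $b_0b_{n-1}$ and $a_{n-1}b_0$ to $a_{j+1}b_j$, both non-edges in that reading, so the direct verification you propose there cannot succeed. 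Fix the model first (no $a$-cycle and no $a$--$b$ twist); with that correction your argument is structurally complete, needing only the explicit boundary computations you deferred.
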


A wheel graph is a cycle of length at least 3, with a single vertex in the center connected to every vertex on the cycle. These graphs are denoted with $W_{n}$ and  have $n+1$ vertices and $2n$ edges, where vertex-set is $ V(W_n)=\{v_{0},v_{1},...,v_{n}\}$ and edge-set is $E(W_n)=\{v_0v_i| 1\leq i \leq n\}\cup \{v_iv_{i+1}|1\leq i \leq n-1\}\cup \{v_n v_1\}$. Vertex $v_0$ is called as interior vertex of the graph, and all other vertices are called external vertices.

In Figure 1 is presented flower snark   $J_9$. Its mixed metric dimension is 4, which is obtained through total enumeration. The one mixed metric basis is   $\{b_0, c_1, c_6, d_3\}.$

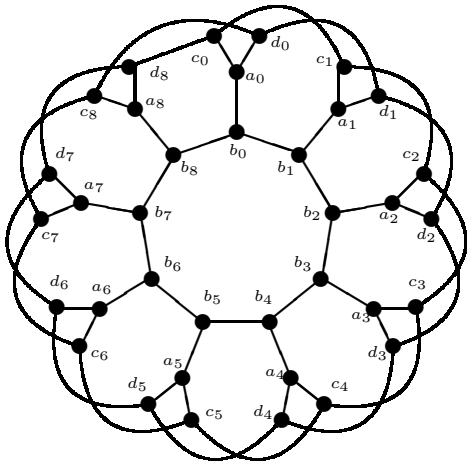
\begin{figure}[htbp]
\centering\setlength\unitlength{1mm}
\begin{picture}(56,56)
\thicklines
\tiny
\put(28.0,48.3){\circle*{2}} \put(28.0,40.3){\circle*{2}}
\put(25.0,53.1){\circle*{2}}
\put(31.0,53.1){\circle*{2}}
\put(29.1,47.2){$a_{0}$} \put(27.0,37.3){$b_{0}$}
\put(21.9,49.7){$c_{0}$} \put(32.5,51.7){$d_{0}$}
\put(28.0,48.3){\line(0,-1){8.0}} \put(28.0,40.3){\line(3,-1){8.4}} \put(28.0,48.3){\line(-3,5){3.0}} \put(28.0,48.3){\line(3,5){3.0}} \qbezier(24.954510,53.123381)(36.410992,62.608941)(42.260146,49.021874) \qbezier(30.991336,53.123381)(44.406972,59.698645)(46.884626,45.141479) \put(41.5,43.4){\circle*{2}} \put(36.3,37.3){\circle*{2}}
\put(42.3,49.0){\circle*{2}}
\put(46.9,45.1){\circle*{2}}
\put(41.4,41.2){$a_{1}$} \put(33.4,35.0){$b_{1}$}
\put(38.5,49.5){$c_{1}$} \put(46.9,42.5){$d_{1}$}
\put(41.5,43.4){\line(-5,-6){5.1}} \put(36.3,37.3){\line(3,-5){4.4}} \put(41.5,43.4){\line(0,1){5.6}} \put(41.5,43.4){\line(3,1){5.4}} \qbezier(42.260146,49.021874)(57.355591,48.833476)(52.880642,34.756095) \qbezier(46.884626,45.141479)(61.610169,41.464346)(53.928933,28.810982) \put(48.7,30.9){\circle*{2}} \put(40.8,29.6){\circle*{2}}
\put(52.9,34.8){\circle*{2}}
\put(53.9,28.8){\circle*{2}}
\put(46.9,28.7){$a_{2}$} \put(36.8,29.0){$b_{2}$}
\put(50.0,36.9){$c_{2}$} \put(51.9,26.1){$d_{2}$}
\put(48.7,30.9){\line(-6,-1){7.9}} \put(40.8,29.6){\line(-1,-6){1.5}} \put(48.7,30.9){\line(1,1){4.2}} \put(48.7,30.9){\line(5,-2){5.3}} \qbezier(52.880642,34.756095)(64.545401,24.817933)(51.846559,17.001150) \qbezier(53.928933,28.810982)(63.067816,16.438061)(48.828155,11.773099) \put(46.2,16.8){\circle*{2}} \put(39.2,20.8){\circle*{2}}
\put(51.8,17.0){\circle*{2}}
\put(48.8,11.8){\circle*{2}}
\put(43.2,15.5){$a_{3}$} \put(35.6,22.3){$b_{3}$}
\put(50.8,19.9){$c_{3}$} \put(45.4,10.4){$d_{3}$}
\put(46.2,16.8){\line(-5,3){6.9}} \put(39.2,20.8){\line(-5,-4){6.8}} \put(46.2,16.8){\line(1,0){5.7}} \put(46.2,16.8){\line(1,-2){2.7}} \qbezier(51.846559,17.001150)(54.616235,1.799433)(39.641754,4.064761) \qbezier(48.828155,11.773099)(48.097865,-3.670152)(33.968998,2.000032) \put(35.2,7.6){\circle*{2}} \put(32.4,15.1){\circle*{2}}
\put(39.6,4.1){\circle*{2}}
\put(34.0,2.0){\circle*{2}}
\put(31.8,7.9){$a_{4}$} \put(30.4,17.9){$b_{4}$}
\put(40.5,6.3){$c_{4}$} \put(30.2,2.5){$d_{4}$}
\put(35.2,7.6){\line(-2,5){2.7}} \put(32.4,15.1){\line(-1,0){8.9}} \put(35.2,7.6){\line(5,-4){4.5}} \put(35.2,7.6){\line(-1,-5){1.2}} \qbezier(39.641754,4.064761)(32.214053,-9.451429)(21.976983,2.000000) \qbezier(33.968998,2.000032)(23.704910,-9.451451)(16.304215,4.064700) \put(20.8,7.6){\circle*{2}} \put(23.5,15.1){\circle*{2}}
\put(22.0,2.0){\circle*{2}}
\put(16.3,4.1){\circle*{2}}
\put(18.2,9.3){$a_{5}$} \put(23.6,17.9){$b_{5}$}
\put(23.8,2.5){$c_{5}$} \put(13.5,6.3){$d_{5}$}
\put(20.8,7.6){\line(2,5){2.7}} \put(23.5,15.1){\line(-5,4){6.8}} \put(20.8,7.6){\line(1,-5){1.2}} \put(20.8,7.6){\line(-5,-4){4.5}} \qbezier(21.976983,2.000000)(7.821067,-3.670259)(7.117774,11.772989) \qbezier(16.304215,4.064700)(1.302668,1.799292)(4.099342,17.001023) \put(9.8,16.8){\circle*{2}} \put(16.7,20.8){\circle*{2}}
\put(7.1,11.8){\circle*{2}}
\put(4.1,17.0){\circle*{2}}
\put(8.7,19.2){$a_{6}$} \put(18.3,22.3){$b_{6}$}
\put(8.6,10.4){$c_{6}$} \put(3.1,19.9){$d_{6}$}
\put(9.8,16.8){\line(5,3){6.9}} \put(16.7,20.8){\line(-1,6){1.5}} \put(9.8,16.8){\line(-1,-2){2.7}} \put(9.8,16.8){\line(-1,0){5.7}} \qbezier(7.117774,11.772989)(-7.148991,16.437875)(2.016905,28.810845) \qbezier(4.099342,17.001023)(-8.626620,24.817739)(3.065164,34.755963) \put(7.3,30.9){\circle*{2}} \put(15.2,29.6){\circle*{2}}
\put(2.0,28.8){\circle*{2}}
\put(3.1,34.8){\circle*{2}}
\put(7.7,32.8){$a_{7}$} \put(17.1,29.0){$b_{7}$}
\put(2.0,26.1){$c_{7}$} \put(3.9,36.9){$d_{7}$}
\put(7.3,30.9){\line(6,-1){7.9}} \put(15.2,29.6){\line(3,5){4.4}} \put(7.3,30.9){\line(-5,-2){5.3}} \put(7.3,30.9){\line(-1,1){4.2}} \qbezier(2.016905,28.810845)(-5.691477,41.464168)(9.061126,45.141379) \qbezier(3.065164,34.755963)(-1.436938,48.833320)(13.685585,49.021798) \put(14.5,43.4){\circle*{2}} \put(19.6,37.3){\circle*{2}}
\put(9.1,45.1){\circle*{2}}
\put(13.7,49.0){\circle*{2}}
\put(15.8,43.9){$a_{8}$} \put(20.5,35.0){$b_{8}$}
\put(7.1,42.5){$c_{8}$} \put(16.4,47.7){$d_{8}$}
\put(14.5,43.4){\line(5,-6){5.1}} \put(19.6,37.3){\line(3,1){8.4}} \put(14.5,43.4){\line(-3,1){5.4}} \put(14.5,43.4){\line(0,1){5.6}} \qbezier(9.061126,45.141379)(16.039347,59.698570)(30.991336,53.123381) \qbezier(13.685585,49.021798)(19.320048,51.072590)(24.954510,53.123381) \end{picture}
\caption{Graph $J_{9}$}
\end{figure}

In Figure 2 is presented wheel graph $W_8$. Its mixed metric dimension is 8, which is obtained through total enumeration. The one mixed metric basis is $\{ v_1, v_2, v_3, v_4, v_5,v_6,v_7,v_8\}.$ In the below figure,  vertices that are elements of the basis are shown in larger circles.

\begin{figure}[htbp]
\centering\setlength\unitlength{1mm}
\begin{picture}(68,68)
\thicklines
\tiny
\put(35.0,34.0){\circle*{2}}  \put(36.4,65.8){$v_{1}$} \put(34.9,63.9){\line(5,-2){21.2}} \put(34.9,63.9){\line(0,-1){29.9}}  \put(58.2,54.7){$v_{2}$} \put(56.1,55.1){\line(2,-5){8.8}} \put(56.1,55.1){\line(-1,-1){21.1}} \put(65.9,35.2){$v_{3}$} \put(64.9,33.9){\line(-2,-5){8.8}} \put(64.9,33.9){\line(-1,0){29.9}} \put(51.7,9.6){$v_{4}$} \put(56.1,12.7){\line(-5,-2){21.2}} \put(56.1,12.7){\line(-1,1){21.1}}  \put(31.4,2.0){$v_{5}$} \put(34.9,3.9){\line(-5,2){21.2}} \put(34.9,3.9){\line(0,1){30.1}}  \put(9.3,13.1){$v_{6}$} \put(13.7,12.7){\line(-2,5){8.8}} \put(13.7,12.7){\line(1,1){21.3}} \put(0,35.2){$v_{7}$} \put(4.9,33.9){\line(2,5){8.8}} \put(4.9,33.9){\line(1,0){30.1}} \put(13.1,58.2){$v_{8}$} \put(13.7,55.1){\line(5,2){21.2}} \put(13.7,55.1){\line(1,-1){21.3}}
 \put(32.2,36.9){$v_{0}$}
\put(35,64.0){\circle*{2}}
\put(35,64.0){\circle{3}}
 \put(56.1,55.1){\circle*{2}}
  \put(56.1,55.1){\circle{3}}
   \put(64.5,33.9){\circle*{2}}
     \put(64.5,33.9){\circle{3}}

     \put(56.1,12.7){\circle*{2}}
      \put(56.1,12.7){\circle{3}}
      \put(35,3.9){\circle*{2}}
       \put(35,3.9){\circle{3}}
      \put(13.7,12.7){\circle*{2}}
         \put(13.7,12.7){\circle{3}}
          \put(4.9,33.9){\circle*{2}}
           \put(4.9,33.9){\circle{3}}
            \put(13.7,55.1){\circle*{2}}
                \put(13.7,55.1){\circle{3}}
\end{picture}
\caption{Graph $W_{8}$}
\end{figure}
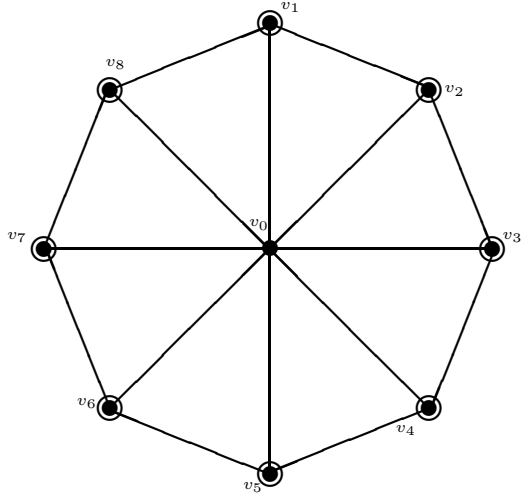

Next it be presented some theoretical properties of metric dimension, edge metric dimension and mixed metric dimension known for in literature,
which is used in the next section.

\begin{prop} \mbox{\rm(\cite{bib8})}
For \label{amc} any graph $G$ it holds $$\beta_{M}(G)\geq \max\{\beta(G),\beta_{E}(G)\}.$$
\end{prop}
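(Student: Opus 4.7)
The plan is to observe that the mixed resolving condition is strictly stronger than either the vertex resolving condition or the edge resolving condition taken alone, and to extract each of the two weaker conditions by restricting attention to the appropriate type of item pair.

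First I would fix a mixed metric basis $S \subseteq V(G)$, so $|S| = \beta_{M}(G)$ and by definition the coordinate vectors $r(a,S)$ are pairwise distinct as $a$ ranges over $V(G) \cup E(G)$. Since the item set includes all vertices, the restriction of this condition to pairs $u, v \in V(G)$ yields that $r(u,S) \neq r(v,S)$ for all distinct $u,v$; that is precisely the definition of $S$ being a (vertex) resolving set. Hence $|S| \geq \beta(G)$, because $\beta(G)$ is the minimum cardinality of such a set.

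Next I would apply the same restriction argument to pairs of edges. For any two distinct $e, f \in E(G)$, the mixed resolving property forces $r(e,S) \neq r(f,S)$, where the coordinates use the vertex-to-edge distance $d(w, e) = \min\{d(w,u), d(w,v)\}$ for $e=uv$. This is exactly the condition that $S$ is an edge resolving set, so $|S| \geq \beta_{E}(G)$.

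Combining the two inequalities gives $\beta_{M}(G) = |S| \geq \max\{\beta(G), \beta_{E}(G)\}$, which is the claim. There is no real obstacle here: the argument is a direct unpacking of the definitions, and the only thing to be careful about is to keep the two types of item pairs (vertex-vertex and edge-edge) notationally separate, and to note that we do not need to invoke the mixed vertex-edge pairs at all for this bound.
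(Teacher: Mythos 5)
Your argument is correct: restricting the mixed resolving condition to vertex--vertex pairs shows a mixed resolving set is a resolving set, and restricting to edge--edge pairs shows it is an edge resolving set, which immediately gives $\beta_{M}(G)\geq \max\{\beta(G),\beta_{E}(G)\}$. The paper itself states this property only with a citation to Kelenc et al.\ and gives no proof, and your definition-unpacking argument is exactly the standard one behind the cited result, so there is nothing further to add.
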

 In the next theorem \cite{bib20}, for  the flower snarks, metric dimension $J_n$ is given.
\begin{thm}\label{metd} \mbox{\rm(\cite{bib20})}
Let $J_n$ be the flower snark. Then for every odd positive integer $n\geq 5$ it holds $\beta(J_n)= 3.$
\end{thm}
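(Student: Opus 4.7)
The strategy is to prove the two inequalities $\beta(J_n)\geq 3$ and $\beta(J_n)\leq 3$ separately.

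Before either bound, I would establish a preparatory distance lemma giving closed-form expressions (or at least sharp case-by-case bounds) for $d(x,y)$ in $J_n$, organized by the types of $x,y$ (inner $a$, central $b$, outer $c$, outer $d$) and by the index difference modulo $n$. This is where the twisted edges $a_{n-1}b_0$ and $b_{n-1}a_0$ really matter, since shortest paths may alternate between the $a$- and $b$-cycles, and paths between $c$- and $d$-vertices can shortcut through the $a$-vertices. The parity hypothesis enters here: oddness of $n$ forces any cycle around the inner/central layer to use the twist an odd number of times, which pins down several distance formulas.

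For the lower bound $\beta(J_n)\geq 3$: since $J_n$ is not a path, $\beta(J_n)\geq 2$. To rule out $\beta(J_n)=2$, I would take an arbitrary two-element set $S=\{u,v\}$ and show that it fails to resolve some pair. Using the isomorphism $h_j$ from Proposition \ref{iso}, a cyclic shift lets me assume $u\in\{a_0,b_0,c_0,d_0\}$, and the built-in $c\leftrightarrow d$ symmetry further reduces the type of $v$, leaving only a small number of (type-of-$u$, type-of-$v$, index-difference)-cases. In each case I would exhibit two vertices with identical $S$-coordinates, typically a "reflected pair" obtained from an $h_j$ that fixes (or swaps) both $u$ and $v$; natural candidates are pairs of the form $\{c_i,d_i\}$ or $\{x_i,x_{-i}\}$.

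For the upper bound $\beta(J_n)\leq 3$: I would exhibit a concrete three-element set, such as $S=\{a_0,a_1,c_{(n-1)/2}\}$ or an analogous triple whose pairwise distances span different scales, and verify that the coordinates $r(x,S)$ are pairwise distinct for all $4n$ vertices. Along each of the four index-classes, the map $i\mapsto d(\text{vertex}_i,w)$ from any fixed basis vertex $w$ is essentially unimodal (a "V-shape") modulo $n$; showing that three such V-shapes at different offsets and across different types jointly separate all $4n$ vertices is a finite combinatorial verification made feasible by the preparatory distance lemma.

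The main obstacle will be that preparatory distance lemma, specifically tight formulas for the $c$-$d$ and mixed $a$-$b$ distances that correctly account for the twist. Once those are in hand, the lower-bound case analysis collapses to producing a handful of explicit non-resolving pairs, and the upper bound becomes a direct bookkeeping check using the V-shape structure.
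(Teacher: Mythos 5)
First, a framing remark: the paper does not prove Theorem~\ref{metd} at all --- it is imported from \cite{bib20} and used as a black box in Step~3 of Theorem~\ref{fs}. So your proposal can only be judged against what a complete proof would require, and there it has concrete gaps. The most serious one is structural: you key your ``preparatory distance lemma'' to the twisted edges $a_{n-1}b_0$, $b_{n-1}a_0$ and to shortest paths ``alternating between the $a$- and $b$-cycles.'' The paper's definition paragraph does say this, but it is internally inconsistent (the listed edge set contains no edges $a_ia_{i+1}$, and adding the stated twist edges would destroy $3$-regularity); the graph actually used --- see the twist edges $c_{2k}d_0$ and $c_0d_{2k}$ in Table~\ref{jedge} and the isomorphism of Property~\ref{iso}, which exchanges $c$ and $d$ upon wrap-around --- is the standard flower snark in which each $a_i$ is the centre of the star $a_ib_i,a_ic_i,a_id_i$ (so the $a_i$ form an independent set, not a cycle), the $b_i$ form an $n$-cycle, and the $c_i,d_i$ form one $2n$-cycle joined by the twist. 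A distance lemma built on the $a/b$-twist picture would be wrong, and both of your bounds rest on it. The symmetry you should extract instead is that the global swap $c_i\leftrightarrow d_i$ is an automorphism fixing every $a_i$ and $b_i$, so $d(c_j,w)=d(d_j,w)$ for all $w\in V_1\cup V_2$; hence every resolving set must contain an outer vertex --- the right anchor for the lower bound, and exactly the mechanism behind Lemma~\ref{lem1}(b) in the mixed setting.

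Second, neither inequality is actually established. For the upper bound you only name a candidate triple $\{a_0,a_1,c_{(n-1)/2}\}$ ``or an analogous triple''; nothing is verified, and the verification is not a finite check but a uniform-in-$n$ case analysis over index ranges (the kind of bookkeeping the paper carries out in Tables~\ref{jvert}--\ref{jedge} for the mixed dimension). Note also that, by the $c\leftrightarrow d$ automorphism, the two $a$-vertices in your triple cannot separate any pair $c_j,d_j$, so all $n$ such pairs must be resolved by the single outer vertex --- this happens to be plausible for $c_{(n-1)/2}$, but it must be computed, and all cross-type pairs ($b$ versus $c/d$, etc.) remain unchecked. For the lower bound, after the symmetry reduction you are still left with several type pairs times roughly $n$ index differences, and you exhibit no unresolved pair in any of them; ``natural candidates $\{c_i,d_i\}$ or $\{x_i,x_{-i}\}$'' is a guess, not an argument (for instance $\{c_i,d_i\}$ need not be unresolved once the $2$-set contains an outer vertex). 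In short, the overall strategy (explicit $3$-element basis plus exclusion of all $2$-sets, with distance formulas as the engine) is the standard one and is sound, but as written your proposal is a blueprint resting on a misread structure, with the decisive computations in both directions still missing.
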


\begin{propo}\label{exc} \mbox{\rm(\cite{bib8})}
Let $v$ be an arbitrary vertex in a graph $G$ and let $S=V(G)\setminus \{v\}.$ If $(\forall w\in N(v))\, (\exists x\in S) \,\, d(vw,x)\neq d(w,x)$, then $S$ is a mixed resolving set for the graph $G$.
\end{propo}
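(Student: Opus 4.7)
The plan is to verify directly that $S = V(G) \setminus \{v\}$ mixed-resolves every pair of items, reducing the work to one configuration that is exactly what the hypothesis supplies. First I would handle vertex--vertex pairs: given distinct $u_1, u_2 \in V(G)$, at least one of them lies in $S$ (since only $v$ is omitted) and therefore has distance $0$ to itself but positive distance to the other, giving a resolver.

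Next I would dispose of edge--edge pairs $e_1 \neq e_2$. My strategy is to find an endpoint $u$ of $e_1$ (or of $e_2$, by symmetry) with $u \in S$ and $u \notin e_2$; such a vertex gives distance $0$ to $e_1$ and at least $1$ to $e_2$. A brief case check shows this search always succeeds: the only way both searches could fail is if every $S$-endpoint of $e_1$ already lies in $e_2$ and vice versa, and since at most one endpoint of either edge can equal $v$, this forces $e_1 = e_2$, a contradiction.

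The central work is the vertex--edge pairs $(u, e)$. Most configurations are immediate. If $u \in S$ and $u \notin e$, then $u$ itself resolves. If $u \in S$ is an endpoint of $e = uw$ with $w \neq v$, then $w \in S$ satisfies $d(u, w) = 1$ and $d(e, w) = 0$. If $u = v$ and $v \notin e$, any endpoint of $e$ lies in $S$ and resolves; if $u = v$ and $e = vw$, then $w$ gives $d(v, w) = 1$ versus $d(vw, w) = 0$. The sole remaining configuration is $u = w$ for some $w \in N(v)$ paired with its incident edge $e = vw$. For any $x \in S$ one has $d(vw, x) = \min\{d(v, x), d(w, x)\}$, which equals $d(w, x)$ \emph{unless} $d(v, x) < d(w, x)$, i.e.\ unless $d(vw, x) \neq d(w, x)$. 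Producing such an $x$ for each $w \in N(v)$ is exactly what the hypothesis guarantees, and this closes the proof.

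The principal obstacle is not technical but organizational: the near-universal set $S$ trivially separates almost every pair of items, and the point of the proof is to isolate the one genuine failure point---a neighbor $w$ of $v$ versus the edge $vw$---and to observe that the stated condition on $N(v)$ is calibrated precisely to cover it.
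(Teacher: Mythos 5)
Your proof is correct. Note that the paper itself states this proposition as a cited result from the literature (Kelenc et al.) and gives no proof of its own, so there is nothing internal to compare against; your direct case analysis --- vertex--vertex pairs resolved by the vertex in $S$, edge--edge pairs resolved by an $S$-endpoint of one edge not lying on the other, and vertex--edge pairs reduced to the single critical configuration of $w\in N(v)$ against the edge $vw$, which is exactly what the hypothesis covers --- is complete and sound.
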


\begin{propo} \mbox{\rm(\cite{bib7})} \label{edim1} Let $G$ be a connected graph, then $\beta_E(G) \geq 1 + \lceil log_2 \delta(G) \rceil$.\end{propo}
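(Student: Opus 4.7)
The plan is to exploit the observation that, in any edge resolving set $S$, a vertex $w\in S$ itself carries no distinguishing information about the edges incident to $w$, because all those edges are at distance $0$ from $w$; this yields a ``free $+1$'' in the lower bound.

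I would start by fixing an arbitrary edge resolving set $S$ of $G$ and selecting any vertex $w\in S$. Let $e_1,\ldots,e_{\deg(w)}$ denote the edges incident to $w$, a list of length at least $\delta(G)$. Since $d(w,e_i)=\min\{d(w,w),d(w,u_i)\}=0$ for each $i$, the $w$-coordinate is the same for all of these edges, so the remaining $|S|-1$ vertices in $S\setminus\{w\}$ must by themselves separate them.

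Next, for each $w'\in S\setminus\{w\}$ and each edge $e_i=wu_i$, I would invoke the triangle inequality $|d(w',u_i)-d(w',w)|\le 1$, which forces $d(w',e_i)=\min\{d(w',w),d(w',u_i)\}\in\{d(w',w)-1,\,d(w',w)\}$. Hence each coordinate indexed by $S\setminus\{w\}$ takes at most two distinct values over the edges incident to $w$, so there are at most $2^{|S|-1}$ distinct coordinate vectors $r(e_i,S\setminus\{w\})$.

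Finally, since $S$ resolves all edges, the $\deg(w)\ge\delta(G)$ such vectors must be pairwise distinct, so $2^{|S|-1}\ge\delta(G)$, which rearranges to $|S|\ge 1+\lceil\log_2\delta(G)\rceil$, yielding the claim. The only conceptual hurdle is the very first step, namely noticing that placing the ``anchor'' vertex $w$ \emph{inside} $S$ is precisely what produces the extra $+1$; once that reduction is in hand, the rest is a short counting argument built on the triangle inequality.
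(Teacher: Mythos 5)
Your proof is correct and follows essentially the same route as the source of this result: the paper states the proposition without proof, citing \cite{bib7}, and the argument there is exactly your anchoring trick --- take $w\in S$, note every edge incident to $w$ has $w$-coordinate $0$ while each coordinate from $S\setminus\{w\}$ takes at most two values on these edges, giving $2^{|S|-1}\geq \deg(w)\geq \delta(G)$ and hence $|S|\geq 1+\lceil \log_2 \delta(G)\rceil$. No gaps to report.
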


\section{\bf  The new theoretical results}

In this section, it will be consider the mixed metric dimension of two important classes of graphs: flower snarks and wheels graph.

\subsection{The mixed metric dimension of flower snarks}

The metric dimension  for flower snarks is given in paper \cite{bib20}. So it was interesting to determine the value for mixed metric dimension for these graphs.
In the Theorem \ref{fs}, the result about the mixed metric dimension is given and here it is shown that the dimension is different for $n=5$ and odd $n\geq 7$.
It should be noted we will omitted all cases which are equivalent with regards to isomorphism given in Property \ref{iso}.

First, let set of vertices $V(J_n)$ be partitioned as follows:
$V_{1}=\{a_{i}| 0\leq i \leq n-1\}$, $V_2=\{b_{i}| 0\leq i \leq n-1\}$, $V_3=\{c_{i}| 0\leq i \leq n-1\}$ and $V_4=\{d_{i}| 0\leq i \leq n-1\}.$

Next, it will be proposed and proved several lemas which help us to prove Theorem \ref{fs}.

\begin{lem}\label{lem1} If $S$ is an arbitrary mixed resolving set of $J_n$, then: \\
a) $S \bigcap (V_1 \bigcup V_2) \neq \emptyset$; \\
b) $S \bigcap (V_3 \bigcup V_4) \neq \emptyset$; \\
c) $(\forall i \in \{0,1,...,n-1\})$  $S \bigcap \{a_j,b_j,c_j,d_j | i \leq j \leq i+k-1\} \neq \emptyset.$ \\

\end{lem}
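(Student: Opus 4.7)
The plan is to prove each of (a), (b), (c) by contradiction: assume $S$ fails the stated condition and exhibit a pair of items in $V(J_n)\cup E(J_n)$ that every vertex of $S$ sees at equal distance, contradicting that $S$ is a mixed resolving set.

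For (a), assume $S\subseteq V_3\cup V_4$. The candidate indistinguishable pair is the vertex $a_i$ together with the incident edge $a_ib_i$ for an arbitrary $i$. Since $d(w,a_ib_i)=\min\{d(w,a_i),d(w,b_i)\}$, it suffices to prove $d(w,a_i)\le d(w,b_i)$ for every $w\in V_3\cup V_4$. The structural reason is that the only edges leaving $V_3\cup V_4$ are the spokes $a_kc_k$ and $a_kd_k$, so every shortest path from $w$ to $b_i\in V_2$ must enter $V_1$ at some $a_{k^*}$ and then travel along the $b$-cycle. If $k^*=i$ one gets $d(w,b_i)=d(w,a_i)+1$; if $k^*\ne i$, I would replace the $b$-cycle tail $b_{k^*}\to\cdots\to b_i$ by an outer-cycle segment to $c_i$ (or $d_i$) followed by the spoke to $a_i$, and verify that this alternative path has length at most $d(w,b_i)$. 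This comparison requires a case analysis accounting for the twist edges $c_{n-1}d_0$ and $d_{n-1}c_0$; it can be carried out either by induction on $d(w,b_i)$ via $d(w,b_i)=1+\min_{x\in N(b_i)}d(w,x)$ or by a direct path-length comparison.

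For (b), assume $S\subseteq V_1\cup V_2$. The key step is to use the involution $\sigma:V(J_n)\to V(J_n)$ defined by $\sigma(c_i)=d_i$, $\sigma(d_i)=c_i$, $\sigma(a_i)=a_i$, $\sigma(b_i)=b_i$. A direct inspection of the edge set shows that $\sigma$ exchanges the spokes $a_ic_i\leftrightarrow a_id_i$, the outer-cycle edges $c_ic_{i+1}\leftrightarrow d_id_{i+1}$, and the twist edges $c_{n-1}d_0\leftrightarrow d_{n-1}c_0$, while fixing every edge of the $b$-cycle and every spoke $a_ib_i$; hence $\sigma$ is an automorphism of $J_n$. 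Because $\sigma$ fixes $V_1\cup V_2$ pointwise, for every $w\in V_1\cup V_2$ one gets $d(w,a_ic_i)=d(\sigma(w),\sigma(a_ic_i))=d(w,a_id_i)$, so the edge pair $(a_ic_i,a_id_i)$ is not resolved by any subset of $V_1\cup V_2$, contradicting the hypothesis on $S$.

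Part (c) is proved in the same spirit: supposing some window $W_i=\{a_j,b_j,c_j,d_j : i\le j\le i+k-1\}$ contains no vertex of $S$, I would exhibit two items of $W_i$ that are equidistant from every $w\notin W_i$. A natural candidate is a pair of items symmetric about the midpoint $m$ of the window, obtained by composing the reflection-type isomorphism of Proposition \ref{iso} centred at $m$ with the involution $\sigma$ of (b) if necessary; verifying that this combined symmetry really does preserve distances from every $w\in S$ to both items is the technical content, and forces a case distinction depending on whether $W_i$ straddles the twist edges. Overall the main obstacle is the distance inequality in part (a), because the shortest path from $w\in V_3\cup V_4$ to $b_i$ may take a non-obvious route through the twist, and the claim that replacing the tail of such a path by an outer-cycle segment never lengthens it must be made fully precise.
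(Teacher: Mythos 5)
Your parts (a) and (b) are essentially sound. For (a) you pick the same unresolved pair as the paper (the vertex $a_i$ against its incident edge $a_ib_i$) and reduce everything to the inequality $d(w,a_i)\le d(w,b_i)$ for $w\in V_3\cup V_4$, which is true (the paper simply asserts the stronger equality $d(w,b_i)=d(w,a_i)+1$); your verification is only sketched, but the idea is the right one. For (b) your argument via the automorphism $\sigma$ swapping $c_i\leftrightarrow d_i$ while fixing $V_1\cup V_2$ pointwise is correct and in fact cleaner than the paper's route, which instead computes distances explicitly to show $r(c_j,V_1\cup V_2)=r(d_j,V_1\cup V_2)$; your version yields the unresolved edge pair $(a_ic_i,a_id_i)$ as well as the vertex pair $(c_j,d_j)$.

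Part (c) is where there is a genuine gap: the symmetry idea cannot be made to work. The reflection $h_m$ of Proposition \ref{iso} centred at the midpoint of the window (with or without composing by $\sigma$) stabilizes the set of columns outside the window only setwise, not pointwise; from $d(w,x)=d(\phi(w),\phi(x))$ you cannot conclude $d(w,x)=d(w,\phi(x))$ unless $w$ is fixed by $\phi$, and the vertices of $S$ may sit anywhere in the $k+1$ complementary columns. Concretely, a pair placed symmetrically about the window's midpoint, such as $b_{m-t}$ and $b_{m+t}$, is resolved by $b_{i-1}$, a vertex lying just outside the window and hence admissible for $S$, so the "technical verification" you defer would simply fail. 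What (c) requires is not a symmetry but a one-sidedness (distance-monotonicity) argument, which is what the paper does: it takes an edge of the central cycle at the boundary of the empty window together with one of its endpoints (in the paper's notation, $b_ib_{i+1}$ paired with $b_i$ or $b_{i+1}$) and checks by explicit distance comparisons that every vertex in the remaining $k+1$ columns is at least as close to the chosen endpoint as to the other, so the minimum defining each mixed coordinate of the edge coincides with the corresponding coordinate of that endpoint and the pair is unresolved. If you redo this, be careful at the extreme complementary column diametrically opposite the window, where the comparison between the distances to the two endpoints of the chosen $b$-edge flips; the edge must be taken at the correct end of the window for all $k+1$ columns to lie on one side, and this is precisely the delicate point of the published argument as well. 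Without replacing your symmetry step by such a comparison, your proof of (c) does not go through.
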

\begin{proof} a)  Suppose the opposite, i.e.  $S \bigcap (V_1 \bigcup V_2) = \emptyset$,
which means that $S \subseteq (V_3 \bigcup V_4)$. For each $i$ and $j$, such that $0 \leq i,j \leq n-1$, it holds
$d(b_j,c_i) =d(b_j,d_i)=d(a_j,c_i)+1$ so $d(a_jb_j,c_i) =d(a_j,c_i)=d(a_jb_j,d_i) =d(a_j,d_i)$
which means that edge $a_jb_j$ has the same metric coordinates as vertex $a_j$
with respect to $V_3 \bigcup V_4$, i.e. $r(a_jb_j,V_3 \bigcup V_4) = r(a_j,V_3 \bigcup V_4)$.
Since $S \subseteq (V_3 \bigcup V_4)$, then holds $r(a_jb_j,S) = r(a_j,S)$,
implying that $S$ is not a mixed resolving set of $J_n$ which is a contradiction with a starting assumption. \\
b) Suppose the opposite, i.e.  $S \bigcap (V_3 \bigcup V_4) = \emptyset$,
which means that $S \subseteq (V_1 \bigcup V_2)$.
For each $i$ and $j$, such that $0 \leq i,j \leq n-1$ and $i \ne j$, it holds
$d(c_j,a_i) =d(d_j,a_i)=d(a_j,a_i)-1$ and $d(c_j,b_i) =d(d_j,b_i)=d(a_j,b_i)+1$.
For $i=j$ similarly $d(c_i,a_i) =d(d_i,a_i)=1$ and $d(c_i,b_i) =d(d_i,b_i)=2=d(a_i,b_i)+1$ holds.
Next, for each $j$, such that $0 \leq j \leq n-1$ it holds
that vertex $c_j$ has the same metric coordinates as vertex $d_j$
with respect to $V_1 \bigcup V_2$, i.e. $r(c_j,V_1 \bigcup V_2) = r(d_j,V_1 \bigcup V_2)$.
Since $S \subseteq (V_1 \bigcup V_2)$, then holds $r(c_j,S) = r(d_j,S)$,
implying that $S$ is not a mixed resolving set of $J_n$ which is a contradiction with a starting assumption. \\
c) Suppose the opposite, i.e.  $S \bigcap \{a_j,b_j,c_j,d_j | i \leq j \leq i+k-1\} = \emptyset$,
which means that $S \subseteq \{a_j,b_j,c_j,d_j | i-k-1 \leq j \leq i-1\}$.
For $i-k \leq j \leq i-1$ it holds $d(b_{i+1},a_j) = d(b_i,a_j)+1$,
$d(b_{i+1},b_j) = d(b_i,b_j)+1$, $d(b_{i+1},c_j) = d(b_i,c_j)+1$
and $d(b_{i+1},d_j) = d(b_i,d_j)+1$ so $d(b_ib_{i+1},a_j) = d(b_i,a_j)$,
$d(b_ib_{i+1},b_j) = d(b_i,b_j)$, $d(b_ib_{i+1},c_j) = d(b_i,c_j)$
and $d(b_ib_{i+1},d_j) = d(b_i,d_j)$ which means that
edge $b_ib_{i+1}$ has the same metric coordinates as vertex $b_i$
with respect to $\{a_j,b_j,c_j,d_j | i-k \leq j \leq i-1\}$.
For only remained case when $j=i-k-1$ it holds
$d(b_{i+1},a_{i-k-1}) = d(b_i,a_{i-k-1})-1$,
$d(b_{i+1},b_{i-k-1}) = d(b_i,b_{i-k-1})-1$, $d(b_{i+1},c_{i-k-1}) = d(b_i,c_{i-k-1})-1$
and $d(b_{i+1},d_{i-k-1}) = d(b_i,d_{i-k-1})-1$ so $d(b_ib_{i+1},a_{i-k-1}) = d(b_{i+1},a_{i-k-1})$,
$d(b_ib_{i+1},b_j) = d(b_{i+1},b_{i-k-1})$, \\
$d(b_ib_{i+1},c_{i-k-1}) = d(b_{i+1},c_{i-k-1})$
and $d(b_ib_{i+1},d_{i-k-1}) = d(b_{i+1},d_{i-k-1})$ which means that
edge $b_ib_{i+1}$ has the same metric coordinates as vertex $b_{i+1}$
with respect to $\{a_{i-k-1},b_{i-k-1},c_{i-k-1},d_{i-k-1}\}$.
Therefore, $r(b_ib_{i+1}, \{a_j,b_j,c_j,d_j | i-k-1 \leq j \leq i-1\}) = r(b_{i+1}, \{a_j,b_j,c_j,d_j | i-k-1 \leq j \leq i-1\})$.
Having in mind that $S \subseteq \{a_j,b_j,c_j,d_j | i-k-1 \leq j \leq i-1\}$,
we have $r(b_ib_{i+1}, S) = r(b_{i+1}, S)$, so $S$ is not a mixed resolving set of $J_n$ which is a contradiction with a starting assumption. \\
\end{proof}

It should be noted that, as is mentioned earlier, all indices in part c) are taken modulo n.
Moreover, without loss of generality, one vertex from $S \bigcap (V_3 \bigcup V_4)$,
from Lemma \ref{lem1}  part b) should be transformed into the vertex $c_0$ by isomorphism
from Property \ref{iso}.

\begin{thm} \label{fs} For odd $n \geq 5$, it holds $\beta_M(J_n) = \begin{cases}
 5, & n = 5  \\
 4, & n \geq 7 \\
\end{cases} $.\end{thm}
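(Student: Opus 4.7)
The plan is to combine an explicit upper-bound construction with a structural lower bound obtained from Lemma \ref{lem1}. For the upper bound I would exhibit a candidate mixed resolving set for each relevant $n$. The example $\{b_0, c_1, c_6, d_3\}$ given for $J_9$ suggests a pattern of one vertex in $V_2$, two vertices in $V_3$ with well separated indices, and one vertex in $V_4$, with indices chosen as explicit functions of $n$; I would propose such a four-element set for every odd $n \geq 7$ and a corresponding five-element set for $n = 5$ (since the theorem claims the upper bound is $5$ there). Verification then amounts to computing, via the standard shortest-path formulas in $J_n$, the coordinate vector of every vertex and every edge with respect to $S$ and checking pairwise distinctness; this is routine but lengthy, and would proceed class by class (edges within the inner cycle, edges within each outer cycle, spoke edges $a_ib_i$, $a_ic_i$, $a_id_i$, and the four vertex classes $V_1,\dots,V_4$).

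For the lower bound when $n \geq 7$ is odd, Proposition \ref{amc} together with Theorem \ref{metd} only gives $\beta_M(J_n) \geq 3$, so I must rule out $|S| = 3$. Lemma \ref{lem1}(a),(b) force $S$ to meet both $V_1 \cup V_2$ and $V_3 \cup V_4$, and using the isomorphism $h_j$ from Property \ref{iso} I may assume without loss of generality that $c_0 \in S$. The window condition in Lemma \ref{lem1}(c), with window length $(n-1)/2$, then further restricts the positions of the remaining two vertices so that every cyclic block of $(n-1)/2$ consecutive indices is hit. I would enumerate the resulting short list of structural types for $S = \{c_0, x, y\}$ with $x \in V_1 \cup V_2$, applying $h_j$ again where possible to collapse symmetric cases, and in each surviving case exhibit a pair of items with identical coordinate vector. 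The key distances to exploit are those between the $b$-cycle edges $b_ib_{i+1}$ and their endpoints, together with the symmetry $r(c_j, V_1\cup V_2) = r(d_j, V_1 \cup V_2)$ used in Lemma \ref{lem1}(b).

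The case $n = 5$ is treated analogously. Here the window length from Lemma \ref{lem1}(c) shrinks to $2$, so $S$ must hit every pair of consecutive layers; combined with (a), (b) and the isomorphism sending the chosen $V_3 \cup V_4$ element to $c_0$, the possible four-element sets fall into a small number of cases, each of which I would rule out by displaying two items not separated by $S$, yielding $\beta_M(J_5) \geq 5$.

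\textbf{Main obstacle.} The principal difficulty I expect is the case analysis in the lower bound, especially in distinguishing items whose coordinate vectors differ only by $\pm 1$ in a single position. The underlying calculations are routine shortest-path computations in $J_n$, but they depend delicately on the parity of $n$ and on the cyclic distances $|i-j|\bmod n$ between the indices of the chosen resolving vertices, and the bookkeeping grows quickly if one does not use Property \ref{iso} aggressively to identify equivalent configurations and cut the number of subcases down to a manageable size.
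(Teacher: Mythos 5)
Your plan coincides with the paper's strategy in its core: an explicit four-element set of the shape you describe (the paper takes $S=\{b_0,c_1,c_{k+2},d_3\}$ for $n=2k+1$, which specializes exactly to the $J_7$ and $J_9$ bases you extrapolated from), verified by tabulating the coordinates of all vertices and edges, plus a lower bound that rules out $|S|=3$ by fixing $c_0\in S$ via Property \ref{iso}, invoking Lemma \ref{lem1}(a)--(c) to constrain the indices of the other two vertices, and then an eight-case analysis exhibiting an unresolved vertex--edge pair in each case. The genuine difference is how the small cases are treated. The paper runs its general upper- and lower-bound arguments only for $n\ge 11$ and settles $n=5,7,9$ by total (computer) enumeration; you propose to handle $n=7,9$ inside the general argument and, more significantly, to prove $\beta_M(J_5)\ge 5$ by a hand case analysis of all four-element candidate sets. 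That last step is where your plan is most underdeveloped: for $J_5$ the constraints of Lemma \ref{lem1} (window length $2$, nonempty intersection with $V_1\cup V_2$ and with $V_3\cup V_4$) are necessary but quite weak when $|S|=4$, so even after quotienting by the isomorphisms of Property \ref{iso} one is left with far more than ``a small number'' of surviving configurations, each requiring an explicit non-separated pair; this is precisely the point at which the paper falls back on exhaustive search rather than argument. Similarly, your verbatim use of the general tables and case distances for $n=7,9$ needs a check that the index ranges do not degenerate for $k=3,4$ (several rows of the paper's tables presuppose $k\ge 5$), which is presumably why the paper enumerates those two cases as well. Neither issue reflects a wrong idea, but both represent substantial unexecuted case work that your plan currently glosses over.
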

\begin{proof}
\underline{{\bf Step 1.}}  {\em Exact value for $n  \in \{5,7,9\}$} \\
By using total enumeration technique, it can be shown that:
\begin{itemize}
\item $\beta_M(J_5)  = 5$, with mixed metric basis $\{a_3, b_0, b_1, c_2, d_3\}$;
\item $\beta_M(J_7)  = 4$, with mixed metric basis $S = \{b_0, c_1, c_5, d_3\}$;
\item $\beta_M(J_9)  = 4$, with mixed metric basis $S = \{b_0, c_1, c_6, d_3\}$.
\end{itemize}

 \textbf{\underline{Step 2}:}  {\em Upper bound equals 4 for $n \geq 11$}.  \\
Let $S = \{b_0, c_1, c_{k+2}, d_3\}$. It will be proved that $S$ is mixed metric resolving set. The representation of coordinates of each vertex and each edge,with respect to $S$, will be shown in the Table \ref{jvert} and Table \ref{jedge}.
\newpage
\begin{table}
\begin{center}
\caption{  Metric coordinates of vertices of $J_{2k+1}$}
\label{jvert}
\begin{tabular}{|c|c|c|}
 \hline
  vetex & cond. & $r(v,S)$\\
 \hline
$a_{i}$ &  $0 \leq i \leq 1$ & ($i+1, 2-i, k+i, 4-i$) \\
$a_2$ &   & ($3, 2, k+1, 2$) \\
$a_{i}$ &  $3 \leq i \leq k$ & ($i+1, i, k+3-i, i-2$) \\
$a_{k+1}$ &   & ($k+1, k+1, 2, k-1$) \\
$a_{i}$  &  $k+2 \leq i \leq k+3$ & ($2k+2-i, 2k+3-i, i-1-k, i-2$) \\
   &  $k+4 \leq i \leq 2k$ & ($2k+2-i, 2k+3-i, i-1-k, 2k+5-i$) \\
 \hline
$b_{i}$ &  $0 \leq i \leq 1$ & ($i, 3-i, k+i+1, 5-i$) \\
$b_2$ &   & ($2, 3, k+2, 3$) \\
$b_{i}$ &  $3 \leq i \leq k$ & ($i, i+1, k+4-i, i-1$) \\
$b_{k+1}$ &   & ($k, k+2, 3, k$) \\
$b_{i}$  &  $k+2 \leq i \leq k+3$ & ($2k+1-i, 2k+4-i, i-k, i-1$) \\
   &  $k+4 \leq i \leq 2k$ & ($2k+1-i, 2k+4-i, i-k, 2k+6-i$) \\
 \hline
$c_0$ &   & ($2, 1, k+1, 5$) \\
$c_{i}$ &  $1 \leq i \leq 2$ & ($i+2, i-1, k+2-i, 5-i$) \\	
    &  $3 \leq i \leq k$ & ($i+2, i-1, k+2-i, i-1$) \\
		&  $k+1 \leq i \leq k+2$ & ($2k+3-i, i-1, k+2-i, i-1$) \\
		&  $k+3 \leq i \leq 2k$ & ($2k+3-i, 2k+4-i, i-2-k, 2k+4-i$) \\
 \hline
$d_0$ &   & ($2, 3, k-1, 3$) \\
$d_{i}$ &  $1 \leq i \leq 2$ & ($i+2, i+1, k-1+i, 3-i$) \\	
    &  $3 \leq i \leq k$ & ($i+2, i+1, k+4-i, i-3$) \\
		&  $k+1 \leq i \leq k+2$ & ($2k+3-i, 2k+2-i, k+4-i, i-3$) \\
		&  $k+3 \leq i \leq k+4$ & ($2k+3-i, 2k+2-i, i-k, i-3$) \\
		&  $k+5 \leq i \leq 2k$ & ($2k+3-i, 2k+2-i, i-k, 2k+6-i$) \\
	  \hline
\end{tabular}
\end{center}
\end{table}
\newpage

\begin{table}
\begin{center}
\caption{Metric coordinates of edges of $J_{2k+1}$}
\label{jedge}
\begin{tabular}{|c|c|c|}
 \hline
edge & cond. & $r(e,S)$\\
  \hline
$ a_0b_0$ &   & ($0, 2, k, 4$) \\
$a_ib_i$    & $1 \leq i \leq 2$  & ($i, i, k+1, 4-i$) \\
    & $3 \leq i \leq k$  & ($i, i, k+3-i, i-2$) \\
$ a_{k+1}b_{k+1}$ &   & ($k, k+1, 2, k-1$) \\
$a_ib_i$    & $k+2 \leq i \leq k+3$  & ($2k+1-i, 2k+3-i, i-1-k, i-2$) \\
    & $k+4 \leq i \leq 2k$  & ($2k+1-i, 2k+3-i, i-1-k, 2k+5-i$) \\
  \hline
$ a_0c_0$ &   & ($1, 1, k, 4$) \\
$a_ic_i$    & $1 \leq i \leq 2$  & ($i+1, i-1, k+2-i, 4-i$) \\
    & $3 \leq i \leq k$  & ($i+1, i-1, k+2-i, i-2$) \\
    & $k+1 \leq i \leq k+2$  & ($2k+2-i, i-1, k+2-i, i-2$) \\
    & $k+3 \leq i \leq 2k$  & ($2k+2-i, 2k+3-i, i-2-k, 2k+4-i$) \\
   \hline
$ a_0d_0$ &   & ($1, 2, k-1, 3$) \\
$a_id_i$    & $1 \leq i \leq 2$  & ($i+1, i, k-1+i, 3-i$) \\
    & $3 \leq i \leq k$  & ($i+1, i, k+3-i, i-3$) \\
$ a_{k+1}d_{k+1}$ &   & ($k+1, k+1, 2, k-2$) \\
$a_id_i$  & $k+2 \leq i \leq k+3$  & ($2k+2-i, 2k+2-i, i-1-k, i-3$) \\
    & $k+4 \leq i \leq 2k$  & ($2k+2-i, 2k+2-i, i-1-k, 2k+5-i$) \\
   \hline
$ b_0b_1$ &   & ($0, 2, k+1, 4$) \\
$b_{i}b_{i+1}$& $1 \leq i \leq 2$  & ($i, i+1, k+3-i, 4-i$) \\
    & $3 \leq i \leq k$  & ($i, i+1, k+3-i, i-1$) \\
    &  $k+1 \leq i \leq k+2$ & ($2k-i, 2k+3-i, 2, i-1$) \\
    & $k+3 \leq i \leq 2k$  & ($2k-i, 2k+3-i, i-k, 2k+5-i$) \\
  \hline
$ c_0c_1$ &   & ($2, 0, k+1, 4$) \\
$c_{i}c_{i+1}$& $1 \leq i \leq 2$  & ($i+2, i-1, k+1-i, 4-i$) \\
    & $3 \leq i \leq k$  & ($i+2, i-1, k+1-i, i-1$) \\
$ c_{k+1}c_{k+2}$ &   & ($k+1, k, 0, k$) \\
$c_{i}c_{i+1}$ & $k+2 \leq i \leq 2k-1$  & ($2k+2-i, 2k+3-i, i-2-k, 2k+3-i$) \\
  \hline
$ c_{2k}d_0$ &   & ($2, 3, k-2, 3$) \\
$ c_0d_{2k}$ &   & ($2, 1, k, 5$) \\
  \hline
$ d_0d_1$ &   & ($2, 2, k-1, 2$) \\
$d_{i}d_{i+1}$& $1 \leq i \leq 2$  & ($i+2, i+1, k-1+i, 2-i$) \\
    & $3 \leq i \leq k$  & ($i+2, i+1, k+3-i, i-3$) \\
$ d_{k+1}d_{k+2}$ &   & ($k+1, k, 2, k-2$) \\
$d_{i}d_{i+1}$ & $k+2 \leq i \leq k+3$  & ($2k+2-i, 2k+1-i, i-k, i-3$) \\
 & $k+4 \leq i \leq 2k-1$  & ($2k+2-i, 2k+1-i, i-k, 2k+5-i$) \\
   \hline
\end{tabular}
\end{center}
\end{table}
As it can be seen from Table \ref{jvert} and Table \ref{jedge},
all items have mutually different metric coordinates, so $S$ is a mixed metric resolving set.
Therefore, $\beta_M(J_n)  \leq 4$. \\


\textbf{\underline{Step 3}:}  {\em Lower bound equals 4 for $n \geq 11$}.  \\
Suppose the opposite, i.e.  $\beta_{M}(J_{n}) \leq 3$.    By Property \ref{amc} and Theorem \ref{metd}  we have $\beta_{M}(J_{n})=3$.
Let $S$ be a mixed resolving set of $J_n$.
Then, from Lemma \ref{lem1}, part $a)$ and part $b)$ could be derived two members of set $S$, i.e. $c_0 \in S$ and
$(\exists i) (a_i \in S \vee b_i \in S)$. Let remaining third member of set $S$ has index $j$,
i.e. $(\exists j) (a_j \in S \vee b_j \in S \vee c_j \in S \vee d_j \in S)$.

From Lemma  \ref{lem1}, part $c)$, it holds that indices $i$ and $j$ have not arbitrary values:
\begin{itemize}
  \item [I)] $i\neq 0 \,\wedge\, j\neq 0;$
  \item [II)] $1\leq i\leq k \Rightarrow k+1\leq j\leq 2k;$
  \item [III)]$ k+1\leq i\leq 2k \Rightarrow 1\leq j\leq k. $

\end{itemize}

Part I) holds, because if $(i= 0 \, \vee \, j= 0) \Rightarrow $
 $$(S \bigcap \{a_p,b_p,c_p,d_p | 1 \leq p \leq k\} = \emptyset \,\vee \,S \bigcap \{a_p,b_p,c_p,d_p | k+1 \leq p \leq 2k\} = \emptyset),$$
 right hand side of implication is in direct contradiction with Lemma \ref{lem1} part $c)$.

Since if $1\leq i\leq k$ then again by Lemma \ref{lem1} part $c)$, it holds \\
S $\bigcap \{a_p,b_p,c_p,d_p | k+1 \leq p \leq 2k\}\neq 0.$ Since $0,i \notin \{p | k+1 \leq p \leq 2k\}$ then it must be $j\in \{p| k+1 \leq p \leq 2k\} $, so part II) holds.

Part III) also follow from Lemma \ref{lem1}, in similar way as part II).

We have 8 possible cases for mixed resolving set $S$.

\textbf{Case 1.} $S =\{c_0,a_i,a_j\}$.\\
Since $i\neq 0\, \wedge j\neq 0$ then
$d(a_0c_0,c_0)=d(c_0,c_0)=0$, $d(a_0c_0,a_i)=d(a_0,a_i)-1=d(c_0,a_i)$
and $d(a_0c_0,a_j)=d(a_0,a_j)-1=d(c_0,a_j)$ imply $r(a_0c_0,S)=r(c_0,S)$
which means that $S$ is not a mixed resolving set. \\

\textbf{Case 2.}$S =\{c_0,a_i,b_j\}$ \\
\textit{Subcase 1.} $1 \leq i \leq  k$ \\
From part II) it holds that $k+1\leq j\leq 2k$. Then $d(a_0c_0,c_0)=d(c_0d_{2k},c_0)=0$, $d(a_0c_0,a_i)=d(c_0,a_i)=d(c_0d_{2k},a_i)$, $d(a_0c_0,b_j)=d(c_0,b_j)=d(b_0,b_{j})+1$ and $d(c_0d_{2k},b_j)=d(d_{2k},b_j)=d(b_{2k},b_j)+2$. Since $d(b_0,b_j)=d(b_{2k},b_j)+1$ then $d(a_0c_0,b_j)=d(c_0d_{2k},b_j)$. Therefore, $r(a_0c_0,S)=r(c_0d_{2k},S)$
which means that $S$ is not a mixed resolving set.\\
\textit{Subcase 2.} $k+1 \leq i \leq  2k$ \\
From part III) it holds that $1 \leq j \leq  k$.
Then $d(a_0c_0,c_0)=d(c_0c_{1},c_0)=0$, $d(a_0c_0,a_i)=d(c_0,a_i)=d(c_0c_{1},a_i)$, $d(a_0c_0,b_j)=d(c_0,b_j)=d(b_0b_{j})+1$ and $d(c_0c_{1},b_j)=d(c_{1},b_j)=d(b_{1},b_j)+2$. Since $d(b_0,b_j)=d(b_{1},b_j)+1$ then $d(a_0c_0,b_j)=d(c_0c_{1},b_j)$. Therefore, $r(a_0c_0,S)=r(c_0c_{1},S)$
which means that $S$ is not a mixed resolving set.\\

\textbf{Case 3.} $S =\{c_0,a_i,c_j\}$ \\
\textit{Subcase 1.} $1 \leq i \leq  k$ \\
From part II) it holds that $k+1 \leq j \leq  2k$.\\ Then $d(a_0c_0,c_0)=d(c_0d_{2k},c_0)=0$, $d(a_0c_0,a_i)=d(c_0,a_i)=d(c_0d_{2k},a_i)$, $d(a_0c_0,c_j)=d(a_0,c_j)=d(c_{2k},c_j)+2$, $d(c_0d_{2k},c_j)=d(d_{2k},c_j)=d(c_{2k},c_j)+2$ and $d(a_0c_0,c_j)=d(c_0d_{2k},c_j)$.Therefore, $r(a_0c_0,S)=r(c_0d_{2k},S)$
which means that $S$ is not a mixed resolving set.\\
\textit{Subcase 2.} $k+1 \leq i \leq  2k$ \\
From part III) it holds that $1 \leq j \leq  k$.
Then, $d(a_0c_0,c_0)=d(c_0,c_0)=0$, $d(a_0c_0,a_i)=d(a_0,a_i)-1=d(c_0,a_i)$
and $d(a_0c_0,c_j)=d(a_0,a_j)-2=d(c_0,c_j)$ imply $r(a_0c_0,S)=r(c_0,S)$
which means that $S$ is not a mixed metric resolving set. \\

\textbf{Case 4.} $S =\{c_0,a_i,d_j\}$ \\
\textit{Subcase 1.} $1 \leq i \leq  k$ \\
From part II) it holds that $k+1 \leq j \leq  2k$.\\ Then $d(a_0c_0,c_0)=0=d(c_0,c_0)=0$, $d(a_0c_0,a_i)=d(a_0,a_i)-1=d(c_0,a_i)$ and
$d(a_0c_0,d_j)=d(a_0,d_j)-1=d(c_0,d_j)$ implying $r(a_0c_0,S)=r(c_0,S)$
which means that $S$ is not a mixed resolving set.\\
\textit{Subcase 2.} $k+1 \leq i \leq  2k$ \\
From part III) it holds that $1 \leq j \leq  k$.
Then $d(c_0,c_0)=d(c_0c_{1},c_0)=0$, $d(c_0,a_i)=d(c_{1},a_i)-1=d(c_0c_{1},a_i)$ and $d(c_0,d_j)=d(c_1,d_j)-1=d(c_0c_{1},d_j)$ then $r(c_0,S)=r(c_0c_{1},S)$
which means that $S$ is not a mixed resolving set.\\

\textbf{Case 5.} $S =\{c_0,b_i,a_j\}$ Reduce to Case 2. by substitution $j'=i, i'=j$.\\

\textbf{Case 6.} $S =\{c_0,b_i,b_j\}$ \\
\textit{Subcase 1.} $1 \leq i \leq  k$ \\
From part II) it holds that $k+1 \leq j \leq  2k$.\\ Then $d(a_0,c_0)=1=d(a_0d_0,c_0)$, $d(a_0,b_i)=d(b_0,b_i)+1=d(d_0,b_i)-1=d(a_0d_0,b_i)$
and $d(a_0,b_j)=d(b_0,b_j)+1=d(d_0,b_j)-1=d(a_0d_0,b_i)$ implying $r(a_0,S)=r(a_0d_0,S)$ which means that $S$ is not a mixed resolving set.\\
\textit{Subcase 2.} $k+1 \leq i \leq  2k$ Reduce to Subcase 2. by substitution $j'=i, i'=j$.\\

\textbf{Case 7.} $S =\{c_0,b_i,c_j\}$ \\
\textit{Subcase 1.} $1 \leq i \leq  k$ \\
From part II) it holds that $k+1 \leq j \leq  2k$.\\ Then $d(b_0b_1,c_0)=d(b_0,c_0)=2=d(a_1,c_0)=d(a_1b_1,c_0)$, $d(b_0b_1,b_i)=d(b_1,b_i)=d(a_1,b_i)-1=d(a_1b_1,b_i)$
and $d(b_0b_1,c_j)=d(b_0,c_j)=d(b_0,b_j)+2=d(c_0,c_j)+2=d(c_0,c_j)+d(c_0,a_1)=d(a_1,c_j)=d(b_1,c_j)-1=d(a_1b_1,c_j)$ implying $r(b_0b_1,S)=r(a_1b_1,S)$ which means that $S$ is not a mixed resolving set.\\
\textit{Subcase 2.} $k+1 \leq i \leq  2k$ \\
From part III) it holds that $1 \leq j \leq  k$.
Then $d(b_0,c_0)=2=d(d_{2k-1},c_0)$, $d(b_0,b_i)=d(b_{2k-1},b_i)+2=d(d_{2k-1},b_i)$ and
$d(b_0,c_j)=d(b_0,b_j)+2=d(c_0,c_j)+2=d(d_{2k-1},c_0)+d(c_0,c_j)=d(d_{2k-1},c_0)$ then $r(b_0,S)=r(d_{2k-1},S)$
which means that $S$ is not a mixed resolving set.\\

\textbf{Case 8.} $S =\{c_0,b_i,d_j\}$ \\
\textit{Subcase 1.} $1 \leq i \leq  k$ \\
From part II) it holds that $k+1 \leq j \leq  2k$.\\ Then $d(b_0b_1,c_0)=d(b_0,c_0)=2=d(a_1,c_0)=d(a_1b_1,c_0)$, $d(b_0b_1,b_i)=d(b_1,b_i)=d(a_1,b_i)-1=d(a_1b_1,b_i)$
and $d(b_0b_1,d_j)=d(b_0,d_j)=d(b_0,b_j)+2=d(d_0,d_j)+2=d(d_0,d_j)+d(d_0,a_1)=d(a_1,d_j)=d(b_1,d_j)-1=d(a_1b_1,d_j)$ implying $r(b_0b_1,S)=r(a_1b_1,S)$ which means that $S$ is not a mixed resolving set.\\
\textit{Subcase 2.} $k+1 \leq i \leq  2k$ \\
From part III) it holds that $1 \leq j \leq  k$.
Then $d(b_0,c_0)=2=d(d_{2k-1},c_0)$, $d(b_0,b_i)=d(b_{2k-1},b_i)+2=d(d_{2k-1},b_i)$ and
$d(b_0,d_j)=d(b_0,b_j)+2=d(d_0,d_j)+2=d(c_{2k-1},d_0)+d(d_0,d_j)=d(c_{2k-1},d_0)$ then $r(b_0,S)=r(c_{2k-1},S)$
which means that $S$ is not a mixed resolving set.\\

Since $S$ is not mixed resolving set in all eight cases, it is in contradiction with starting assumption,
so $\beta_M(J_n)\geq 4.$
Therefore, from the previous three steps, the proof of theorem is completed.
\end{proof}

It would be interesting to make comparison between mixed metric dimension and metric dimension for flower snarks.
For $n=5$ situation is easy since all three dimensions can be obtained by a total enumeration,
so $\beta(J_{n})=3<\beta_{E}(J_{n})=4<\beta_{M}(J_{n})=5$.
For odd $n\geq 7$, mixed metric dimension is larger than metric dimension, i.e. $\beta(J_{n})=3<\beta_{M}(J_{n})=4$.
From the Theorem \ref{fs} it is easy to see, that for $n \geq 7$, similar to metric dimension, mixed metric dimension for
flower snarks is constant,  i.e. it doesn't depend on $n$.

When we consider edge metric dimension, situation is not so clear.
Theorem \ref{fs} has obvious corollary, that for $n \geq 7$ it is $\beta_E(J_n) \leq 4$.
From the Theorem \ref{edim1} from \cite{bib8}, lower bound of edge metric dimension applied to
flower snarks is $\beta_E(J_n) \geq 3$, so it holds $(\forall n \geq 7) \beta_E(J_n) \in \{3, 4\}$.
It would be interesting to find exact value.

\subsection{Mixed metric dimension of wheels graphs}

In the following, it will be obtained the mixed metric dimension  of wheels graphs.  In the next theorem, the mixed metric dimension of these graphs is determined.
\begin{thm}$$\beta_{M}(W_{n})=\left\{
                     \begin{array}{ll}
                       4, & \hbox{n=3;} \\
                       n, & \hbox{$n\geq 4$.}
                     \end{array}
                   \right.$$

\end{thm}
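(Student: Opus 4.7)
The plan is to handle the lower bound first (for all $n \ge 3$), then the upper bound for $n \ge 4$ via Proposition \ref{exc}, and finally to treat the exceptional case $n = 3$ separately.

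For the lower bound I rely on the fact that in $W_n$ every external vertex is at distance $1$ from the centre and the diameter is at most $2$. For each $i \in \{1, \ldots, n\}$, I compare the vertex $v_0$ with the spoke edge $v_0 v_i$: one has $d(v_0, v_0) = 0 = d(v_0 v_i, v_0)$, and for every $j \notin \{0, i\}$ the identity $d(v_0 v_i, v_j) = \min(d(v_0, v_j), d(v_i, v_j)) = 1 = d(v_0, v_j)$ holds (the minimum is $1$ since $d(v_0, v_j) = 1$ and $d(v_i, v_j) \ge 1$). Hence the only vertex that can resolve the pair $\{v_0, v_0 v_i\}$ is $v_i$ itself, so every mixed resolving set contains $\{v_1, \ldots, v_n\}$. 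This already gives $\beta_M(W_n) \ge n$ for every $n \ge 3$.

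For the matching upper bound when $n \ge 4$, I invoke Proposition \ref{exc} with $v = v_0$, so $S = V(W_n) \setminus \{v_0\} = \{v_1, \ldots, v_n\}$. Since $N(v_0) = \{v_1, \ldots, v_n\}$, for each $w = v_i$ I must exhibit an $x \in S$ with $d(v_0 v_i, x) \neq d(v_i, x)$. The choice $x = v_{i+2}$ (indices mod $n$) works: for $n \ge 4$, $v_{i+2}$ lies in $S \setminus \{v_{i-1}, v_i, v_{i+1}\}$, so $d(v_i, v_{i+2}) = 2$ while $d(v_0 v_i, v_{i+2}) = \min(1, 2) = 1$. Proposition \ref{exc} then yields $\beta_M(W_n) \le n$, completing the $n \ge 4$ case.

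For $n = 3$, the wheel $W_3$ coincides with $K_4$ and the argument used above for the upper bound breaks down because there is no vertex at distance $2$ from $v_i$. The inclusion $\{v_1, v_2, v_3\} \subseteq S$ from the lower bound still applies; moreover, for each $i$ the pair $\{v_i, v_0 v_i\}$ is resolved only by $v_0$ (for $j \notin \{0, i\}$ one has $d(v_j, v_i) = 1 = d(v_j, v_0 v_i)$, and $d(v_i, v_i) = 0 = d(v_i, v_0 v_i)$), forcing $v_0 \in S$ as well. Thus $S = V(W_3)$, giving $\beta_M(W_3) \ge 4$; conversely, taking $S = V(W_3)$ trivially resolves every pair of items, since in $K_4$ each vertex code has a unique single $0$-entry and each edge code has a unique pair of $0$-entries. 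The main obstacle of the argument is spotting the pair $\{v_0, v_0 v_i\}$ and recognising that it is resolved only by $v_i$; once this asymmetry is isolated, both bounds (including the extra one needed for $n = 3$) fall out almost mechanically from distance-$0/1/2$ bookkeeping.
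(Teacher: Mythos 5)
Your argument is correct, and it differs from the paper's proof in how the two bounds are obtained, even though the decisive observation is the same. For the lower bound you and the paper both exploit the pair consisting of $v_0$ and a spoke edge $v_0v_i$; the paper phrases this as a contradiction argument split into the cases $v_0\notin S$ and $v_0\in S$ (using the codes $(1,\dots,1)$ and $(0,1,\dots,1)$), whereas you show directly that only $v_i$ resolves this pair, so \emph{every} mixed resolving set must contain all of $v_1,\dots,v_n$ --- a slightly stronger conclusion (uniqueness of the basis for $n\ge 4$) obtained with the same bookkeeping. For the upper bound the paper gives a constructive proof by tabulating the codes of all vertices and edges with respect to $V(W_n)\setminus\{v_0\}$, and it explicitly remarks that Proposition \ref{exc} would give an indirect proof; you take exactly that indirect route, checking the hypothesis with the witness $x=v_{i+2}$ (valid for $n\ge 4$ since then $v_{i+2}$ is non-adjacent to $v_i$, so $d(v_i,v_{i+2})=2\neq 1=d(v_0v_i,v_{i+2})$), which is much shorter at the cost of not exhibiting the explicit coordinate vectors. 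Finally, for $n=3$ the paper simply appeals to total enumeration, while you give a genuine argument: the forced inclusion of $v_1,v_2,v_3$ still applies, the pair $\{v_i,v_0v_i\}$ in $W_3\cong K_4$ forces $v_0\in S$ as well, and $V(K_4)$ is easily checked to be a mixed resolving set via the zero patterns of the codes. All steps check out; the only cosmetic point is that the index arithmetic $i+2 \pmod n$ should be understood as landing in $\{1,\dots,n\}$.
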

Proof: For $n=3$ using total enumeration, we have $\beta_{M}(W_{3})=4=|W_3|$ so that $S=V(W_3).$\\
Therefore, mixed metric dimension of wheels will be considered only for case when $n\geq 4$.  In order to present it more clearly, the representation of mixed metric coordinates will be given in Table \ref{tab3} of each
vertex and each edge with respect to $V(W_n)$.

\begin{table}
\begin{center}
\caption{Mixed metric representations for $W_n$}
\label{tab3}
\vspace{0.3cm}
\begin{tabular}{|c|c|c|}
  \hline
  vetex & cond. & $r(v,V)$\\
  \hline

  $v_{0}$ &   & $(0,1,...,1)$ \\
 $v_{1}$ &  & ($1,0,1,2...,2,1$) \\
 $v_{2}$ &  & ($1,1,0,1,2...,2$) \\
  $v_{i}$ & $3 \leq i \leq n-2$  & ($1,2,...,2,1,\underbrace{0}_{i},1,2,...,2$) \\
  $v_{n-1}$ &  & ($1,2,...,2,1,0,1$) \\
  $v_{n}$ &  & ($1,1,2,...,2,1,0$) \\

  \hline
edge & cond. & $r(e,V)$\\
\hline
 $ v_0v_{i}$ &  $1 \leq i \leq n$ & ($0,1,...,1,\underbrace{0}_{i},1,...,1$) \\

      \hline
   $ v_1v_{2}$ &   & ($1,0,0,1,2,...,2,1$) \\
  $ v_1v_{n}$ &   & ($1,0,1,2,...,2,1,0$) \\
  $ v_2v_{3}$ &   & ($1,1,0,0,1,2,...,2$) \\
  $v_{i}v_{i+1}$& $3 \leq i \leq n-3$ & ($1,2,...,2,1,\underbrace{0}_{i},0,1,2,...,2$) \\
  $v_{n-2}v_{n-1}$& & ($1,2,...,2,1,0,0,1$) \\
 $v_{n-1}v_{n}$& & ($1,1,2,...,2,1,0,0$) \\
\hline
\end{tabular}
\end{center}
\end{table}

\vspace{0.0cm}
Two steps will be considered.\\

\textbf{\underline{Step 1}:}  {\em Upper bound for $n \geq 4$}.  \\
Let $S=\{v_i| 1\leq i\leq n\}$ and it will be proved that $S$ is mixed resolving set of $W_{n}$ for $n\geq 4.$ Since $S=V \setminus \{v_0\}$, then for each item, vector of metric coordinates with respect to $S$ is presented in the Table \ref{tab4}.

\begin{table}
\begin{center}
\caption{Mixed metric representations for $W_n$}
\label{tab4}
\vspace{0.3cm}
\begin{tabular}{|c|c|c|}
  \hline
  vetex & cond. & $r(v,V\setminus\{v_0\})$\\
  \hline

  $v_{0}$ &   & $(1,...,1)$ \\
 $v_{1}$ &  & ($0,1,2...,2,1$) \\
 $v_{2}$ &  & ($1,0,1,2...,2$) \\
  $v_{i}$ & $3 \leq i \leq n-2$  & ($2,...,2,1,\underbrace{0}_{i},1,2,...,2$) \\
  $v_{n-1}$ &  & ($2,...,2,1,0,1$) \\
  $v_{n}$ &  & ($1,2,...,2,1,0$) \\

  \hline
edge & cond. & $r(e,V\setminus\{v_0\})$\\
\hline
 $ v_0v_{i}$ &  $1 \leq i \leq n$ & ($1,...,1,\underbrace{0}_{i},1,...,1$) \\

      \hline
   $ v_1v_{2}$ &   & ($0,0,1,2,...,2,1$) \\
  $ v_1v_{n}$ &   & ($0,1,2,...,2,1,0$) \\
  $ v_2v_{3}$ &   & ($1,0,0,1,2,...,2$) \\
  $v_{i}v_{i+1}$& $3 \leq i \leq n-3$ & ($2,...,2,1,\underbrace{0}_{i},0,1,2,...,2$) \\
  $v_{n-2}v_{n-1}$& & ($2,...,2,1,0,0,1$) \\
 $v_{n-1}v_{n}$& & ($1,2,...,2,1,0,0$) \\
\hline
\end{tabular}
\end{center}
\end{table}

When $n\geq 4 $ there is at least one coordinate equal to 2. According to the previous, it follows that mixed metric coordinates of all items  are mutually different. Therefore, $S$ is a mixed resolving set, so it holds $\beta_{M}(W_{n})\leq n$.\\

\textbf{\underline{Step 2}:}  {\em Lower bound for $n \geq 4$}.  \\
Assume the opposite that it is $\beta_{M}(W_{n})\leq n-1$. Then mixed resolving set $S$ exists, so that $|S|\leq n-1$.

\textbf{Case 1.} \underline{$v_0 \notin  S$}.

Since $|V|=n+1 \,\, \wedge \,\, |S|\leq n-1 \,\, \Rightarrow (\exists i)\,\, 1\leq i \leq n, \, v_i\notin S.$  From the previous holds that $r(v_0,S)=r(v_0v_i, S)=(1,...,1)$. It wil be concluded that $S$ is not mixed resolving set, which is a contradiction with a starting assumption.

\textbf{Case 2.} \underline{$v_0 \in S$}.

Since $|V|=n+1 \,\, \wedge \,\, |S|\leq n-1 \,\, \Rightarrow (\exists i,j)\,\, 1\leq i,j \leq n, \, v_i,v_j\notin S.$  From the previous holds that $r(v_0,S)=r(v_0v_i, S)=(0,1,...,1).$ It will be concluded that $S$ is not mixed resolving set, which is a contradiction  with a starting assumption.

Since  $S$ is not mixed resolving set in both cases, it follows that $\beta_{M}(W_{n})\geq n.$

Therefore, from Step 1 and Step 2, it follows that $\beta_{M}(W_{n})= n.$

\begin{flushright}
$\Box$
\end{flushright}

It should be noted that Step 1  could  also be indirectly proved using Proposition \ref{exc} from \cite{bib8}. In order to give constructive proof, we have decided on the proof presented above.
\vspace{0,2cm}

As with the prism graphs, all three previously mentioned metric invariants could be compared. It is already known from \cite{bib10} that $$\beta(W_n)=\left\{
                     \begin{array}{ll}
                       3, & \hbox{n=3,6} \\
                       2, & \hbox{n=4,5} \\
                       \lfloor\frac{2n+2}{5}\rfloor, & \hbox{$n\geq 6$}

                     \end{array}
                   \right.$$
 and from   \cite{bib11}, it follows that $$\beta_{E}(W_n)=\left\{
                     \begin{array}{ll}
                       n, & \hbox{n=3,6} \\
                       n-1, & \hbox{$n\geq 5$}

                     \end{array}
                   \right..$$

From the previous, it follows that edge metric dimension of wheel graphs is strictly larger than the value for the metric dimension, except for $n=3$ when these dimensions are equal. By comparing to the mixed metric dimension obtained in Theorem 2.2., it follows that mixed metric dimension is strictly larger than the value for the edge metric dimension, except for $n=4$, when these dimensions are equal. Unlike the prism graphs described above, for wheels graphs, mixed metric dimension depends on $ n $.

\section{Conclusions}

In this paper, mixed metric dimension for flower snarks and wheel graphs is considered.
First, it is given lemma about some properties of mixed resolving sets of
flower snark graphs. Next, it is used for obtaining exact value,
which flower snarks it is constant and equal to 4, for $n \geq 7$.
Last, it is present mixed metric dimension for wheels, and it is proved to be equal to $n$, for $n \leq 4$,
while $\beta_{M}(W_{3})=4$.

Further work can be directed in finding mixed metric dimension of some other interesting classes of graphs.
Other direction could be finding exact value of edge metric dimension of flower snarks.

\section*{Acknowledgements}

The author is grateful to Jozef Kratica for its constructive and useful comments which have helped to improve the quality of this paper greatly.

\end{document}